\theoremstyle{plain}
\newtheorem{theorem}{Theorem}
\newtheorem{proposition}[theorem]{Proposition}
\newtheorem*{notations*}{Notations}
\theoremstyle{definition}
\newtheorem{remark}[theorem]{Remark}
\newcommand{\f}{\varphi}
\newcommand{\CC}{\mathbb C}
\newcommand{\PP}{\mathbb P}
\newcommand{\ZZ}{\mathbb Z}
\newcommand{\C}{{\mathcal C}}
\newcommand{\F}{{\mathcal F}}
\newcommand{\G}{{\mathcal G}}
\newcommand{\I}{{\mathcal I}}
\newcommand{\J}{{\mathcal J}}
\def\O{\mathcal O}
\newcommand{\Ker}{{\mathcal Ker}}
\newcommand{\Coker}{{\mathcal Coker}}
\newcommand{\Image}{{\mathcal Im}}
\newcommand{\Aut}{\operatorname{Aut}}
\newcommand{\Ext}{\operatorname{Ext}}
\newcommand{\Hom}{\operatorname{Hom}}
\newcommand{\D}{{\scriptscriptstyle \operatorname{D}}}
\newcommand{\E}{\operatorname{E}}
\newcommand{\GL}{\operatorname{GL}}
\newcommand{\h}{\operatorname{h}}
\def\H{\operatorname{H}}
\newcommand{\Hilb}{\operatorname{Hilb}_{{\mathbb P}^2}}
\newcommand{\M}{\operatorname{M}_{{\mathbb P}^2}}
\newcommand{\N}{\operatorname{N}}
\newcommand{\st}{{\scriptstyle \operatorname{s}}}
\newcommand{\sst}{{\scriptstyle \operatorname{ss}}}
\newcommand{\tensor}{\otimes}
\newcommand{\isom}{\simeq}
\newcommand{\lra}{\longrightarrow}
\newcommand{\ds}{\displaystyle}
\newcommand{\ba}{\begin{array}}
\newcommand{\ea}{\end{array}}
\begin{document}

\title[Relative Hilbert schemes and moduli spaces of torsion plane sheaves]
{Relative Hilbert schemes and moduli spaces of torsion plane sheaves}

\author{Mario Maican}
\address{Institute of Mathematics of the Romanian Academy,
Calea Grivi\c{t}ei 21, Bucure\c{s}ti 010702, Romania}
\email{mario.maican@imar.ro}

\subjclass[2010]{14D20, 14D22}
\keywords{Moduli of sheaves, Semi-stable sheaves, Hilbert schemes}

\maketitle

Let $\M(r, \chi)$ denote the moduli space of Gieseker semi-stable sheaves $\F$ on the complex projective plane $\PP^2$
having Hilbert polynomial $P_{\F}(m) = rm + \chi$, where $r$ and $\chi$ are fixed integers, $r > 0$.
Let $\Hilb(l, d)$ denote the flag Hilbert scheme of pairs  $(Z, C)$, where $C \subset \PP^2$ is a curve
of degree $d$ and $Z \subset C$ is a zero-dimensional subscheme of length $l$. Here $l$ and $d$ are
fixed positive integers.
In this note we will exhibit certain relations between $\Hilb(l, d)$ and $\M(r, \chi)$.
This will allow us to prove that the moduli spaces $\M(r, \pm 1)$ are rational.
Le~Potier \cite{lepotier} showed by a different method that $\M(r, \chi)$ are rational if $\chi = \pm 1$, $\pm 2$.
At Proposition \ref{proposition_9} we will prove that some of the spaces $\M(r, \chi)$ are unirational.

\begin{notations*} \hfill

\begin{tabular}{r c l}
$p(\F)$ & $=$ & $\chi(\F)/r$, the slope of $\F$; \\
$\F^\D$ & $=$ & ${\mathcal Ext}^1(\F, \omega_{\PP^2})$, the dual of a one-dimensional sheaf $\F$ on $\PP^2$; \\
$[\F]$ & $=$ & the stable-equivalence class of $\F$; \\
$\CC_P$ & $=$ & the structure sheaf of a closed point $P \in \PP^2$; \\
$\Hilb(l)$ & $=$ & the Hilbert scheme of zero-dimensional subschemes of $\PP^2$ of length $l$.
\end{tabular}
\end{notations*}

\begin{proposition}
\label{proposition_1}
Assume that $d \ge 2$.
Let $n$ be an integer such that $1 \le n \le d-1$.
Let $\J_Z \subset \O_C$ be the ideal sheaf of $Z$ in $C$.
The sheaf $\J_Z$ is semi-stable in either of the following cases:
\begin{enumerate}
\item[(i)] ${\ds l \le \frac{d}{2} }$; or
\item[(ii)] ${\ds l \le \frac{d}{2} + \frac{dn(n+1)}{2(d-1)}}$ and $Z$ is not a subscheme of a curve of degree $n$.
\end{enumerate}
If the inequalities are strict, then $\J_Z$ is stable.
Moreover, the bound in (i) is sharp.
\end{proposition}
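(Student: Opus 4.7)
The plan is to argue by contradiction: I would assume $\J_Z$ admits a destabilizing proper subsheaf $\F'$ and derive a lower bound on $l$ incompatible with the hypotheses. Since $\O_C$ is pure of dimension one, so is $\F'$. Let $n'$ denote its rank, with $1 \le n' \le d - 1$, and set $n'' := d - n'$. Let $C_0 \subset C$ be the scheme-theoretic support of $\F'$, a subcurve of degree $n'$, and let $C'_0$ be the residual subcurve, of degree $n''$, so that $C = C_0 \cup C'_0$ scheme-theoretically.

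First I would reduce to the ``largest'' destabilizer with support $C_0$. If $f = f_0 f'_0$ is the equation of $C$, with $f_0, f'_0$ defining $C_0, C'_0$, one checks directly that $\I_{C'_0/C}$ equals the annihilator of $\I_{C_0/C}$ in $\O_C$. Since $\I_{C_0/C}$ annihilates $\F'$, this forces $\F' \subseteq \I_{C'_0/C}$, and combining with $\F' \subseteq \J_Z$ yields $\F' \subseteq \I_Y$, where $\I_Y := \J_Z \cap \I_{C'_0/C}$ is the ideal in $\O_C$ of $Y := Z \cup C'_0$. The quotient $\I_Y / \F'$ is zero-dimensional, so $p(\F') \le p(\I_Y)$; I may thus replace $\F'$ by $\I_Y$. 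From the short exact sequence
$$0 \lra \O_C/\I_Y \lra \O_Z \oplus \O_{C'_0} \lra \O_{Z \cap C'_0} \lra 0$$
and the standard formulas for the Hilbert polynomials of plane curves, a short calculation produces the destabilization criterion
$$p(\I_Y) > p(\J_Z) \iff |Z \cap C'_0| > \frac{n'' l}{d} + \frac{n''(d - n'')}{2}.$$

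For part (i), the trivial estimate $|Z \cap C'_0| \le l$ forces any destabilizer to satisfy $l > n'' d/2 \ge d/2$, contradicting the hypothesis; strict inequality $l < d/2$ rules out equality of slopes as well and gives stability. Sharpness is shown by the case $l = d/2$ and $Z$ contained in a line subcurve $C'_0 \subset C$: there $|Z \cap C'_0| = l$ produces $p(\I_Y) = p(\J_Z)$, so $\J_Z$ is semi-stable but not stable.

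For part (ii), I would exploit the length identity $l = |Z \cap C'_0| + |Z : C'_0|$ (valid for zero-dimensional $Z$) together with the crucial consequence of the hypothesis: if $n'' \le n$, then $Z : C'_0$ cannot lie in any curve $D$ of degree $n - n''$, since otherwise $Z \subseteq C'_0 \cup D$ would be contained in a curve of degree $n$. Hence $|Z : C'_0| \ge h^0(\O_{\PP^2}(n - n'')) = \binom{n - n'' + 2}{2}$, yielding $|Z \cap C'_0| \le l - \binom{n - n'' + 2}{2}$. Feeding this into the destabilization inequality produces
$$l > F(n'') := \frac{n'' d}{2} + \frac{d (n - n'' + 1)(n - n'' + 2)}{2 (d - n'')},$$
while for $n'' > n$ the argument of (i) already gives $l > n'' d / 2 \ge (n+1) d / 2$. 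The main obstacle is then to show that the minimum of $F$ on $\{1, \ldots, n\}$ is attained at $n'' = 1$, with value $d/2 + d n (n+1)/(2(d-1))$, and that this value does not exceed $(n+1) d / 2$; both reduce to the elementary factorization $(d - n - 1)(d - n - 2) \ge 0$, which holds because $d \ge n + 1$. The resulting $l > d/2 + d n (n+1)/(2(d-1))$ contradicts hypothesis (ii), finishing the proof.
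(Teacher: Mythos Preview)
Your argument follows the same overall strategy as the paper's: control a proper subsheaf $\F' \subset \J_Z$ by passing to the ideal of a subcurve and comparing slopes. The resulting destabilization inequality and the minimization over $n''$ agree with the paper's computation (the paper's parameter $s$ is your $n''$, and the factorization $(s-1)(d-n-1)(d-n-2)$ you allude to is exactly what makes $F(s)\ge F(1)$).

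There is one gap. You assert that the scheme-theoretic support $C_0$ of $\F'$ is a subcurve of $C$ of degree equal to the multiplicity $n'$, but neither claim is immediate: one must argue that the annihilator of $\F'$ in $\O_{\PP^2}$ is principal, generated by a factor of the equation of $C$, and that $\F'$ is generically of rank one on that subcurve. The paper avoids this by invoking \cite[Lemma~6.7]{pacific}, which directly produces a subcurve $S\subset C$ (your $C'_0$) with $\F'\subset\J_S$ and zero-dimensional quotient; your reduction to $\I_Y$ is then exactly the paper's containment $\J\subset\J_S$.

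For part~(ii) your route to the key lower bound is genuinely different. After replacing $\F'$ by $\I_Y=\J_Z\cap\I_{C'_0/C}$, you bound $|Z\cap C'_0|$ via the residual scheme $Z:C'_0$, using that $Z:C'_0$ cannot lie on a curve of degree $n-n''$. The paper instead bounds $\h^0(\J_S/\J)$ for the \emph{original} subsheaf by exhibiting an injection $\Gamma(\I_S(n))\hookrightarrow\Gamma(\J_S/\J)$, forced by $\Gamma(\I_Z(n))=0$. Both produce the same bound $\binom{n-n''+2}{2}$; your version is more geometric, the paper's more direct (no preliminary replacement of $\J$ is needed).

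One minor mismatch: your sharpness example ($l=d/2$, $Z$ on a linear component) shows the \emph{stability} bound $l<d/2$ is optimal, yielding a strictly semi-stable sheaf, and only applies when $d$ is even. The paper instead takes any $l>d/2$ and exhibits an \emph{unstable} $\J_Z$ (with $C=L\cup C'$ and $Z\subset L$), showing that the semi-stability bound $l\le d/2$ in~(i) cannot be relaxed, which is what the statement asks for.
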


\begin{proof}
The structure sheaf $\O_C$ of $C$ has no zero-dimensional torsion, so the same is true of $\J_Z$.
Let $\J \subset \J_Z$ be a subsheaf of multiplicity at most $d - 1$.
According to \cite[Lemma 6.7]{pacific}, there is a curve $S \subset C$ of degree $s \le d-1$ such that
its ideal sheaf $\J_S \subset \O_C$ contains $\J$ and $\J_S/\J$ is supported on finitely many points.
Thus,
\begin{align*}
P_{\J}(m) & = P_{\J_S}(m) - \h^0(\J_S/\J) \\
& = P_{\O_C}(m) - P_{\O_S}(m) - \h^0(\J_S/\J) \\
& = dm - \frac{d(d-3)}{2} - \left( sm - \frac{s(s-3)}{2} \right) - \h^0(\J_S/\J),
\end{align*}
hence
\[
p(\J) = - \frac{d+s}{2} + \frac{3}{2} - \frac{\h^0(\J_S/\J)}{d-s}. \qquad \text{We know that} \quad
p(\J_Z) = -\frac{d}{2} + \frac{3}{2} - \frac{l}{d}.
\]
It follows that $p(\J) \le p(\J_Z)$ if and only if
\[
\frac{l}{d} \le \frac{s}{2} + \frac{\h^0(\J_S/\J)}{d-s}.
\]
The right-hand-side is at least $1/2$, which proves (i).
Assume now that $Z$ is not a subscheme of a curve of degree $n$.
Let $\I_Z$, $\I_S$, $\I \subset \O$ be the preimages of $\J_Z$, $\J_S$, $\J$ under the map $\O \to \O_C$.
We claim that, if $s \le n$, then
\[
\h^0(\J_S/\J) \ge \dim \Gamma (\I_S(n)) = {n - s + 2 \choose 2}.
\]
Indeed, we have an exact sequence of linear maps
\[
0 \lra \Gamma(\I(n)) \lra \Gamma(\I_S(n)) \lra \Gamma((\I_S/\I)(n)) = \Gamma((\J_S/\J)(n)) = \Gamma(\J_S/\J).
\]
If the vector space on the right-hand-side had smaller dimension than $\Gamma(\I_S(n))$,
then we would find a non-zero element of $\Gamma(\I(n))$, which is contained in $\Gamma(\I_Z(n))$.
This would contradict our hypothesis on $Z$, proving the claim.
It follows that the inequalities
\begin{align*}
\frac{l}{d} & \le \frac{s}{2} + \frac{1}{d - s} {n - s + 2 \choose 2} & & \text{for} \quad 1 \le s \le n, \\
\frac{l}{d} & \le \frac{s}{2} & & \text{for} \quad n+1 \le s \le d-1
\end{align*}
guarantee the semi-stability of $\J_Z$. For $1 \le s \le n \le d-1$ we have the inequalities
\[
\frac{1}{2} + \frac{1}{d - 1} {n + 1 \choose 2} \le \frac{s}{2} + \frac{1}{d - s} {n - s + 2 \choose 2}.
\]
Moreover, if $n \le d - 2$, then
\[
\frac{1}{2} + \frac{1}{d - 1} {n + 1 \choose 2} \le \frac{n+1}{2}.
\]
This proves (ii).
The bound in (i) is sharp, meaning that, whenever $l > d/2$, we can find $(Z, C)$
such that $\J_Z$ is unstable. Take $C=L \cup C'$ for a line $L$ and a curve $C'$,
take $Z$ a subscheme of $L$. The ideal of $L$ in $C$ is a destabilising subsheaf of $\J_Z$.
\end{proof}

\noindent
Under hypothesis (i) of Proposition \ref{proposition_1}, we have a map
\[
\eta \colon \Hilb(l,d) \lra \M \big( d, - \frac{d(d-3)}{2} - l \big), \qquad
\eta(Z,C) = [\J_Z].
\]
This map is a morphism because it is associated to the flat family $\{ \J_Z \}$ over $\Hilb(l, d)$.
The image of $\eta$, denoted by $\H(l ,d)$, is a closed subset of the moduli space;
we equip it with the induced reduced structure.
Under hypothesis (ii), we can define $\eta$ on an open subset of the Hilbert scheme.

\begin{proposition}
\label{proposition_2}
For all $d \ge 3$ the map $\eta \colon \Hilb(1,d) \to \H(1,d)$ is an isomorphism.
\end{proposition}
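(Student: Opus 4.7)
First, the hypothesis $d \ge 3$ gives $l = 1 < d/2$ strictly, so by Proposition~\ref{proposition_1}(i) the sheaf $\J_Z$ is stable for every $(Z,C) \in \Hilb(1,d)$, and $\eta$ is defined on the entire Hilbert scheme. Note that $\Hilb(1,d)$ is the incidence variety $\{(P,C) : P \in C\} \subset \PP^2 \times |\O_{\PP^2}(d)|$, a projective bundle over $\PP^2$, hence smooth and irreducible.

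My plan is to prove that $\eta$ is bijective on closed points and then exhibit an inverse morphism. Surjectivity is immediate from the definition of $\H(1,d)$. For injectivity, suppose $\J_Z \isom \J_{Z'}$ for two pairs $(Z,C), (Z',C') \in \Hilb(1,d)$. The annihilator ideal of $\J_Z$ inside $\O_{\PP^2}$ is the defining ideal of $C$, so $C = C'$, and it remains to show $\J_Z$ and $\J_{Z'}$ coincide as subsheaves of $\O_C$. I would deduce this from the equality $\dim \Hom(\J_Z, \O_C) = 1$. Any nonzero $\f \colon \J_Z \to \O_C$ is injective: otherwise $\ker \f$ would be a proper subsheaf of the stable $\J_Z$ of strictly smaller slope, forcing the image to have slope strictly greater than $p(\J_Z)$, which combined with the purity of $\O_C$ leads to a contradiction. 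An injective $\f$ then has cokernel of length one (by the Hilbert-polynomial count $P_{\O_C} - P_{\J_Z} = 1$), so $\f$ realizes $\J_Z$ as the ideal of a length-one subscheme $Y \subset C$ with $\J_Y \isom \J_Z$. For smooth integral $C$ of genus $\ge 1$, Riemann--Roch yields $\h^0(\O_C(P)) = 1$ for every $P$, hence $Y = Z$; the same conclusion for general $C$ should follow from the simplicity $\Hom(\J_Z, \J_Z) = \CC$ (which comes from stability) combined with purity of $\O_C$. Once the $\Hom$ is one-dimensional, the composition $\J_Z \isom \J_{Z'} \hookrightarrow \O_C$ is a scalar multiple of the tautological inclusion of $\J_Z$, so $\J_{Z'} = \J_Z$ as subsheaves and $Z = Z'$.

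To upgrade bijectivity to an isomorphism of schemes, I would construct an inverse morphism in families. Given a $T$-flat family $\F$ on $\PP^2 \times T$ whose fibers lie in $\H(1,d)$, the Fitting support of $\F$ is a $T$-flat family $\mathcal{C} \subset \PP^2 \times T$ of degree-$d$ curves, flatness following from the constancy of the fiber Hilbert polynomials $P_{\O_{\mathcal{C}_t}} = P_{\F_t} + 1$. The fiberwise embedding $\F_t \hookrightarrow \O_{\mathcal{C}_t}$, unique up to scalar by the $\Hom$-computation, globalizes étale-locally on $T$ to an inclusion $\F \hookrightarrow \O_{\mathcal{C}}$ well defined up to a nowhere-vanishing function; its cokernel is a $T$-flat family $\mathcal{Z} \subset \mathcal{C}$ of length-one subschemes, independent of the scalar ambiguity. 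Applied to an étale-local universal family on $\H(1,d)$, this descends (twisting the universal family by a line bundle from the base does not alter the cokernel subscheme) to a global morphism $\H(1,d) \to \Hilb(1,d)$ inverse to $\eta$.

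The main obstacle I foresee is the $\Hom$-computation for singular or non-reduced $C$, where Riemann--Roch is not directly available; one must argue uniformly from stability of $\J_Z$ and purity of $\O_C$. Once that rigidity is in hand, the family-level construction and its étale descent follow from standard techniques.
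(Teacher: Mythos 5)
Your overall strategy (prove set-theoretic injectivity via a rigidity statement, then build the inverse in families) could in principle be completed, but as written it has a genuine gap precisely at its load-bearing step: the claim $\dim\Hom(\J_Z,\O_C)=1$ for \emph{every} point of \emph{every} degree-$d$ curve. You prove it only for $C$ smooth (Riemann--Roch), and for singular, reducible or non-reduced $C$ you write that it ``should follow from simplicity combined with purity'' --- but it does not: since $\J_Z$ and $\O_C$ are stable and simple, having a second, independent morphism $\J_P\to\O_C$ is \emph{equivalent} to the existence of $Q\neq P$ on $C$ with $\J_Q\isom\J_P$, i.e.\ to the very injectivity you are trying to establish, so the argument is circular exactly in the hard cases. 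A genuine proof needs something like: localize an isomorphism $\J_P\isom\J_Q$ at $P$ to see that $\mathfrak m_P\subset\O_{C,P}$ would be principal, generated by a nonzerodivisor, forcing $P$ (and $Q$) to be regular points; then rule out $\O_C(-P)\isom\O_C(-Q)$ on integral components of genus $\ge 1$ and on reducible curves by restricting a second section of $\O_C(P)$ to the residual curve and using $\h^0(\O_{C'})=1$ together with the negativity of $\deg$ after subtracting the intersection with the residual curve. Also, your preliminary step --- that any nonzero $\f\colon\J_Z\to\O_C$ is injective --- does not follow from ``purity of $\O_C$'' plus the slope inequality for quotients of the stable sheaf $\J_Z$: for $d=4$ an image of multiplicity $3$ and Euler characteristic $-2$ passes that naive slope test. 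One needs the structure of subsheaves of $\O_C$ (they lie in ideals $\J_S$ of subcurves, \cite[Lemma 6.7]{pacific}, as in Proposition \ref{proposition_1}), equivalently the stability of $\O_C$ with that input.

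On the scheme-theoretic part, your plan is reasonable but again rests on the unproven fibrewise Hom computation (needed both for the rank-one relative $\mathcal Hom$ and base change) and on the existence of \'etale-local universal families on the stable locus $\H(1,d)\subset\M(d,\cdot)$, a nontrivial fact (the moduli stack of stable sheaves is a $\mathbb G_m$-gerbe over the coarse space) that should at least be cited; the twist-by-a-line-bundle independence you invoke for descent is then fine. The paper takes a different and self-contained route: it recovers the pair $(P,C)$ by algebraic operations on the Beilinson spectral sequence of the dual sheaf $\G=\F^\D$, namely $(\CC_P,\O_C)=(\Coker(\f_1),\,\Coker(\f')(-d+3))$, which produces local inverse morphisms directly (and in particular subsumes injectivity) without any universal-family or Hom-rigidity considerations. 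So: the blueprint is salvageable, but the central rigidity claim for singular and non-reduced curves must actually be proved, and the injectivity of nonzero homomorphisms must use the subsheaf structure of $\O_C$, before the family-level construction can stand.
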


\begin{proof}
Clearly, $\eta$ is bijective.
To prove that $\eta^{-1}$ is a morphism we will construct local inverse morphisms as at \cite[Theorem 3.1.6]{dedicata}.
Given $\F = \J_P \subset \O_C$ we need to construct the pair $(P, C) \in \Hilb(1,d)$ from the first sheet
$\E^1(\F)$ of the Beilinson spectral sequence converging to $\F$, by performing algebraic operations.
For technical reasons we will, instead, work with the Beilinson spectral sequence of the dual sheaf
$\G = \F^\D$, which, according to \cite{rendiconti}, gives a point in $\M(d, d(d-3)/2 +1)$.
Dualising the resolution
\[
0 \lra \O(-d) \oplus \O(-2) \lra 2\O(-1) \lra \F \lra 0
\]
we get the exact sequence
\[
0 \lra 2\O(-2) \lra \O(-1) \oplus \O(d-3) \lra \G \lra 0,
\]
hence the extension
\[
0 \lra \O_C(d-3) \lra \G \lra \CC_P \lra 0.
\]
The relevant part of $\E^1(\G)$ is
\[
\xymatrix
{
\H^1(\G(-1)) \tensor \O(-2) \ar[r]^-{\f_1} & \H^1(\G \tensor \Omega^1(1)) \tensor \O(-1) \ar[r]^-{\f_2}
& \H^1(\G) \tensor \O \\
\H^0(\G(-1)) \tensor \O(-2) \ar[r]^-{\f_3} & \H^0(\G \tensor \Omega^1(1)) \tensor \O(-1) \ar[r]^-{\f_4}
& \H^0(\G) \tensor \O
}
\]
We have $\h^1(\G(-1))=2$, $\h^1(\G \tensor \Omega^1(1))=1$, $\h^1(\G) =0$.
As $\G$ maps surjectively onto $\Coker(\f_1)$, we have the isomorphisms
$\Coker(\f_1) \isom \CC_Q$ for some $Q \in \PP^2$ and $\Ker(\f_1) \isom \O(-3)$
(the other possibility, that $\Coker(\f_1) \isom \O_L(-1)$ for a line $L \subset \PP^2$,
violates the semi-stability of $\G$).
The exact sequence (2.2.5) from \cite{dedicata}, takes the form
\[
0 \lra \O(-3) \stackrel{\f_5}{\lra} \Coker(\f_4) \lra \G \lra \CC_Q \lra 0.
\] 
Put $\G'=\Coker(\f_5)$. From the exact sequence (2.2.4) in \cite{dedicata}, we get the resolution
\[
0 \to \H^0(\G(-1)) \tensor \O(-2) \to
\O(-3) \oplus \H^0(\G \tensor \Omega^1(1)) \tensor \O(-1) \stackrel{\f'}{\to} \H^0(\G) \tensor \O \to \G' \to 0.
\]
Thus $\H^0(\G')=\H^0(\G)$.
Since $\G'$ is generated by its global sections, we see that $\G' = \O_C(d-3)$.
It is also clear that $P = Q$. We conclude that
\[
(\CC_P, \O_C)=(\Coker(\f_1), \ \Coker(\f')(-d+3)),
\]
so the pair $(P, C)$ depends algebraically on $\E^1(\G)$.
\end{proof}

\noindent
The case $d = 3$ of Proposition \ref{proposition_2} is known from \cite{lepotier}
(in this case $\H(1, 3)$ is the entire moduli space).
The cases $d=4, 5$ were dealt with at \cite[Theorem 3.2.1]{dedicata}, respectively, \cite[Proposition 3.2.5]{illinois}.

\begin{proposition}
\label{proposition_3}
For all $d \ge 5$ the map $\eta \colon \Hilb(2,d) \to \H(2,d)$ is an isomorphism.
\end{proposition}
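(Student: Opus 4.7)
The strategy parallels that of Proposition~\ref{proposition_2}. Bijectivity of $\eta$ is immediate: given a stable $\F \cong \J_Z \subset \O_C$ in $\H(2, d)$, the curve $C$ is recovered as the Fitting support of $\F$ and $Z$ as the cokernel of the inclusion $\J_Z \hookrightarrow \O_C$. The real task is to upgrade this set-theoretic inverse to a morphism, and for this I will construct local inverse morphisms from the first sheet $\E^1(\G)$ of the Beilinson spectral sequence converging to the dual sheaf $\G = \F^\D$.

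The starting point is a resolution of $\F$. Since any length-$2$ subscheme $Z$ of $\PP^2$ lies on a unique line, $\I_Z$ is generated by a linear form $\ell$ and a conic $q \notin (\ell)$, giving the Koszul-type resolution $0 \to \O(-3) \to \O(-1) \oplus \O(-2) \to \I_Z \to 0$. Combining this with $0 \to \O(-d) \to \O \to \O_C \to 0$ by a mapping cone yields $0 \to \O(-d) \oplus \O(-3) \to \O(-1) \oplus \O(-2) \to \F \to 0$, and dualising produces
\[
0 \to \O(-2) \oplus \O(-1) \to \O(d-3) \oplus \O \to \G \to 0,
\]
together with the natural extension $0 \to \O_C(d-3) \to \G \to \O_Z \to 0$. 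From this resolution one computes $\h^1(\G(-1)) = 1$, $\h^1(\G \otimes \Omega^1(1)) = 0$ and $\h^1(\G) = 0$ for $d \ge 5$, and the required values of the $\h^0$'s. The pair $(Z, C)$ will then be extracted from $\E^1(\G)$ by algebraic operations, so that the subsheaf $\O_C(d-3) \subset \G$ recovers $C$ and the torsion quotient $\O_Z$ recovers $Z$.

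The hard part, compared to Proposition~\ref{proposition_2}, is that here $\h^1(\G \otimes \Omega^1(1)) = 0$, so $\Coker(\f_1) = 0$ and one loses the shortcut by which the torsion sheaf $\CC_P$ was identified there. Instead, the single non-zero term $\O(-2)$ in the $q = 1$ row of $\E^1(\G)$ must be killed by the $d_2$-differential $d_2 \colon \O(-2) \to \Coker(\f_4)$; convergence of the spectral sequence forces $d_2$ to be injective and yields $\G \cong \Coker(\f_4)/\Image(d_2)$. The main technical effort will then be to distinguish algebraically the subsheaf $\O_C(d-3)$ from the length-$2$ quotient $\O_Z$ inside $\Coker(\f_4)/\Image(d_2)$, mirroring the role of $\f_5$ in the proof of Proposition~\ref{proposition_2}, and to verify that the resulting extraction of $(Z,C)$ is compatible with flat families parameterised by $\Hilb(2, d)$.
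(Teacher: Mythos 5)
Your setup is fine as far as it goes: the resolution $0 \to \O(-d) \oplus \O(-3) \to \O(-1) \oplus \O(-2) \to \F \to 0$, the dualised resolution, the extension $0 \to \O_C(d-3) \to \G \to \O_Z \to 0$, and the cohomology values $\h^1(\G(-1)) = 1$, $\h^1(\G \tensor \Omega^1(1)) = 0$, $\h^1(\G) = 0$ for the untwisted dual $\G = \F^\D$ are all correct, and so is the observation that convergence forces $d_2 \colon \O(-2) \to \Coker(\f_4)$ to be injective with $\G \isom \Coker(\f_4)/\Image(d_2)$. But at that point you stop exactly where the proof begins. The whole content of the statement is the construction of $(Z, C)$ by algebraic operations on $\E^1(\G)$ (so that the construction patches into local inverse morphisms to $\eta$), and your final paragraph merely announces this as ``the main technical effort'': you must ``distinguish algebraically the subsheaf $\O_C(d-3)$ from the quotient $\O_Z$'' inside $\Coker(\f_4)/\Image(d_2)$. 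Recovering $\G$ itself from the spectral sequence is easy; recovering $Z$ and $C$ from it functorially is the nontrivial step, and with your choice of twist there is no evident spectral-sequence term that realises $\O_Z$ or $\O_C(d-3)$, since the entire $q=1$ row has collapsed to a single $\O(-2)$. Saying that $C$ is the Fitting support and $Z$ the support of the torsion quotient gives set-theoretic bijectivity, but to get a morphism you would still have to show these extractions are algebraic in flat families over (an open cover of) $\H(2,d)$, which is precisely what is not done.

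This is also why the paper works with $\G = \F^\D(-1)$ rather than $\F^\D$: with that twist the $q=1$ row of the tableau becomes $4\O(-2) \to 4\O(-1) \to \O$, and the middle cohomology $\C = \Ker(\f_2)/\Image(\f_1)$ is shown --- using $\Ker(\f_2) \isom \Omega^1 \oplus \O(-1)$, the Euler sequence, a maximal-rank cancellation as in the multiplicity-five case, and semi-stability of $\G$ to rule out $\C$ being a twisted structure sheaf of a line or conic --- to be exactly $\O_Y \isom \O_Z$, with $\Ker(\f_1) \isom \O(-4)$; then the exact sequences (2.2.4)--(2.2.5) of the Geom.\ Dedicata paper produce $\G' = \Coker(\f_5)$ with $\G' = \O_C(d-4)$, so that $(\O_Z, \O_C)$ is exhibited as a pair of cokernels of matrices depending algebraically on $\E^1(\G)$. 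None of this has a worked-out analogue in your untwisted setup, so as it stands the proposal is a plan for a proof, not a proof: either carry out the extraction of $(Z,C)$ from $\Coker(\f_4)/\Image(d_2)$ in a way that is visibly algebraic in families, or switch to the twist that makes $\O_Z$ appear as a subquotient of the top row.
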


\begin{proof}
Clearly, $\eta$ is bijective. As at Proposition \ref{proposition_2},
given $\F= \J_Z \subset \O_C$, we need to construct $(Z,C) \in \Hilb(2,d)$ starting from $\E^1(\F)$.
For technical reasons we will work, instead, with the Beilinson spectral sequence of the sheaf $\G= \F^\D(-1)$, which,
according to \cite{rendiconti}, gives a point in $\M(d,d(d-5)/2+2)$.
Dualising the resolution
\[
0 \lra \O(-d) \oplus \O(-3) \lra \O(-2) \oplus \O(-1) \lra \F \lra 0
\]
we get the exact sequence
\[
0 \lra \O(-3) \oplus \O(-2) \lra \O(-1) \oplus \O(d-4) \lra \G \lra 0,
\]
hence the extension
\[
0 \lra \O_C(d-4) \lra \G \lra \O_Z \lra 0.
\]
Tableau (2.2.3) \cite{dedicata} for $\G$ has the form
\[
\xymatrix
{
4\O(-2) \ar[r]^-{\f_1} & 4\O(-1) \ar[r]^-{\f_2} & \O \\
\H^0(\G(-1)) \tensor \O(-2) \ar[r]^-{\f_3} & \H^0(\G \tensor \Omega^1(1)) \tensor \O(-1) \ar[r]^-{\f_4}
& \H^0(\G) \tensor \O
}
\]
Write $\C = \Ker(\f_2)/\Image(\f_1)$.
Note that $\Ker(\f_2) \isom \Omega^1 \oplus \O(-1)$ because $\f_2$ is surjective.
The Euler sequence on $\PP^2$ reads
\[
0 \lra \O(-3) \lra 3\O(-2) \stackrel{\pi}{\lra} \Omega^1 \lra 0.
\]
Clearly, the corestriction $4\O(-2) \to \Omega^1 \oplus \O(-1)$ of $\f_1$ factors through the map
\[
(\pi, id) \colon 3\O(-2) \oplus \O(-1) \lra \Omega^1 \oplus \O(-1).
\]
We obtain an exact sequence of the form
\[
0 \lra \Ker(\f_1) \lra \O(-3) \oplus 4\O(-2) \stackrel{\psi}{\lra} 3\O(-2) \oplus \O(-1) \lra \C \lra 0.
\]
As at \cite[Proposition 2.1.4]{illinois}, it can be shown that $\psi_{12}$ has maximal rank.
Canceling $3\O(-2)$, we arrive at the exact sequence
\[
0 \lra \Ker(\f_1) \lra \O(-3) \oplus \O(-2) \lra \O(-1) \lra \C \lra 0.
\]
Note that $\C$ cannot be isomorphic to the twisted structure sheaf of a line or of a conic curve,
otherwise $\C$ would destabilise $\G$.
We deduce that $\C \isom \O_Y$ for a zero-dimensional subscheme $Y \subset \PP^2$ of length $2$,
and that $\Ker(\f_1) \isom \O(-4)$.
The exact sequence (2.2.5) from \cite{dedicata} takes the form
\[
0 \lra \O(-4) \stackrel{\f_5}{\lra} \Coker(\f_4) \lra \G \lra \O_Y \lra 0.
\] 
Denote $\G'=\Coker(\f_5)$. From the exact sequence (2.2.4) in \cite{dedicata} we get the resolution
\[
0 \to \H^0(\G(-1)) \tensor \O(-2) \to
\O(-4) \oplus \H^0(\G \tensor \Omega^1(1)) \tensor \O(-1) \stackrel{\f'}{\to} \H^0(\G) \tensor \O \to \G' \to 0.
\]
Thus $\H^0(\G')=\H^0(\G)$.
Since $\G'$ is generated by its global sections, we see that $\G' = \O_C(d-4)$.
It is also clear that $Z=Y$. We conclude that
\[
(\O_Z, \O_C)=(\Coker(\f_1), \ \Coker(\f')(-d+4)),
\]
so the pair $(Z,C)$ depends algebraically on $\E^1(\G)$.
\end{proof}

\noindent
The cases $d=4, 5$ of Proposition \ref{proposition_3} were dealt with at
\cite[Section 5.2]{dedicata}, respectively, \cite[Proposition 2.2.5]{illinois}.
Note that, when $d = 4$, $\eta$ gives an isomorphism on the preimage of the set of stable points.
The semi-stable but not stable points in $\H(2, 4)$ are of the form $[\O_L(-2) \oplus \O_{C'}(-1)]$,
where $L \subset \PP^2$ is a line and $C' \subset \PP^2$ is a cubic curve.
The fibre of $\eta$ over such a point is isomorphic to the Hilbert scheme of two points on $L$,
that is, to $\PP^1 \times \PP^1$.

Let $n \ge 2$ be an integer, put $l = n(n+1)/2$ and denote by
$\Hilb^0(l, d)$ the open subset of $\Hilb(l, d)$ given by the condition that $Z$
be not a subscheme of a curve of degree $n-1$.
Let $\Hilb^0(l)$ the open subset of $\Hilb(l)$ given by the same condition.
Here we assume that $n \le d$, which implies condition (ii) from Proposition \ref{proposition_1}.
Thus, $\eta$ is defined on $\Hilb^0(l, d)$; we denote its image by $\H_0(l, d)$.

\begin{proposition}
\label{proposition_4}
Assume that $n+2 \le d$.
Then $\eta \colon \Hilb^0(l, d) \to \H_0(l, d)$ is an isomorphism.
\end{proposition}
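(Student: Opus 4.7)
The plan follows the template of Propositions \ref{proposition_2} and \ref{proposition_3}. Bijectivity of $\eta$ is immediate, since $C$ is recovered as the scheme-theoretic support of the double dual $\O_C = (\J_Z)^{\D\D}$ and $Z$ as the scheme defined by $\O_C/\J_Z$. The substantive task is to produce Zariski-local inverse morphisms, which amounts to exhibiting $(Z, C)$ algebraically from the first sheet of the Beilinson spectral sequence of a suitably twisted dual of $\F = \J_Z$.

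The hypothesis $Z \in \Hilb^0(l)$ with $l = n(n+1)/2$ gives $\h^0(\I_Z(n-1)) = 0$ and, via $\chi(\I_Z(n-1)) = 0$ and $\h^2(\I_Z(n-2)) = 0$, implies that $\I_Z$ is $n$-regular, with minimal free resolution
$$0 \lra \O(-n-1)^n \lra \O(-n)^{n+1} \lra \I_Z \lra 0.$$
Combining this with the equation of $C$, one obtains a resolution of $\F$ of the form
$$0 \lra \O(-d) \oplus \O(-n-1)^n \lra \O(-n)^{n+1} \lra \F \lra 0.$$
Dualising, and twisting by an integer analogous to the twists $0$ and $-1$ used in Propositions \ref{proposition_2} and \ref{proposition_3}, produces a sheaf $\G$ with an explicit free resolution, a structural extension $0 \to \O_C(c) \to \G \to \O_Z \to 0$ for a specific twist $c$, and a prescribed position in some $\M(d, \chi)$ by \cite{rendiconti}. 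The twist is tuned so that the Betti numbers of $\G$ in the Beilinson tableau are as concentrated as possible.

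The heart of the proof is the tableau analysis of $\E^1(\G)$. Computing $\h^i$ of $\G$, $\G(-1)$, and $\G \tensor \Omega^1(1)$ from the resolution, writing down $\f_1, \ldots, \f_4$, and canceling common summands as in the $3\O(-2)$ reduction of Proposition \ref{proposition_3}, one arrives at an exact sequence
$$0 \lra \Ker(\f_1) \lra \O(-a_1) \oplus \O(-a_2) \lra \O(-b) \lra \C \lra 0,$$
where $\C = \Ker(\f_2)/\Image(\f_1)$. I expect the main obstacle to be showing $\C \isom \O_Y$ for a zero-dimensional subscheme $Y \subset \PP^2$ of length $l$: a priori $\C$ could be a twisted structure sheaf $\O_D(k)$ of a plane curve $D$ of degree $1 \le \deg D \le n-1$, and each such alternative must be excluded by an explicit slope comparison against $p(\G)$, invoking the semi-stability of $\G$. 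The inequality $n+2 \le d$ is precisely what makes these slope comparisons strict, while the hypothesis that $Z$ is not contained in a curve of degree $n-1$ forces $\Ker(\f_1)$ to be a single line bundle of the predicted degree, rather than splitting off a surplus direct summand.

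Once the identification $\C \isom \O_Y$ is established, the argument closes as at Propositions \ref{proposition_2} and \ref{proposition_3}: the exact sequences (2.2.4)--(2.2.5) of \cite{dedicata} yield $\G' = \Coker(\f_5)$ with $\H^0(\G') = \H^0(\G)$; since $\G'$ is globally generated, $\G' \isom \O_C(c)$; and $Y = Z$. This exhibits $(Z, C)$ as a functor of $\E^1(\G)$, proving that $\eta^{-1}$ is a morphism.
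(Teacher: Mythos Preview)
Your overall template matches the paper's---resolve $\I_Z$, pass to $\F=\J_Z$, dualise and twist, then read $(Z,C)$ off the Beilinson tableau---but two of your specific predictions are wrong and one claim is false.

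First, injectivity is \emph{not} immediate. The biduality $(\J_Z)^{\D\D}\simeq\O_C$ fails: for a pure one-dimensional sheaf the $\D$-bidual returns the sheaf itself, so $(\J_Z)^{\D\D}\simeq\J_Z$, not $\O_C$. More to the point, isomorphic ideal sheaves need not cut out the same subscheme (already on a smooth curve, $\J_P\simeq\J_Q$ for distinct points). The paper instead proves injectivity from the resolution
\[
0\lra \O(-d)\oplus n\O(-n-1)\stackrel{\f}{\lra}(n+1)\O(-n)\lra\J_Z\lra 0:
\]
since $d\ge n+2$, any automorphism of $\O(-d)\oplus n\O(-n-1)$ is block lower-triangular, so an isomorphism $\J_Z\simeq\J_Y$ forces the $\f_{12}$-blocks to be equivalent, whence $\I_Z\simeq\I_Y$ and $Z=Y$.

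Second, with the twist $\G=\F^{\D}(1-n)$ one computes $\h^1(\G)=0$, so $\f_2=0$ and the top row of the tableau is already
\[
(n+1)\O(-2)\stackrel{\f_1}{\lra} n\O(-1)\lra 0,
\]
with no Euler-sequence cancellation as in Proposition~\ref{proposition_3}. Your predicted reduced shape $\O(-a_1)\oplus\O(-a_2)\to\O(-b)$ never appears.

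Third, the paper does \emph{not} exclude one-dimensional $\Coker(\f_1)$ by slope comparisons. It simply records $\Ker(\f_1)\simeq\O(-k-2)$ for some $k$, feeds this into (2.2.4)--(2.2.5) of \cite{dedicata}, and observes that $\G'=\Coker(\f_5)$ is globally generated with $\H^0(\G')=\H^0(\G)=\H^0(\O_C(d-n-2))$; hence $\G'=\O_C(d-n-2)$ and $\Coker(\f_1)\simeq\O_Z$ a posteriori. The hypothesis $d\ge n+2$ enters only through the injectivity argument above and the global generation of $\O_C(d-n-2)$, not through any destabilisation inequality.
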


\begin{proof}
According to \cite[Propositions 4.5 and 4.6]{modules-alternatives}, the ideal sheaves
$\I_Z \subset \O_{\PP^2}$ of zero-dimensional subschemes $Z \subset \PP^2$
of length $l$ that are not contained in curves of degree $n-1$
are precisely the sheaves having a resolution of the form
\[
0 \lra n\O(-n-1) \stackrel{\psi}{\lra} (n+1)\O(-n) \lra \I_Z \lra 0,
\]
where the maximal minors of $\psi$ have no common factor.
It follows that the sheaves $\J_Z$ giving points in $\H_0(l, d)$ are precisely the sheaves having
a resolution of the form
\[
0 \lra \O(-d) \oplus n\O(-n-1) \stackrel{\f}{\lra} (n+1)\O(-n) \lra \J_Z \lra 0,
\]
where the maximal minors of $\f_{12}$ have no common factor.
From this it immediately becomes clear that $\eta$ is injective.
Indeed, any isomorphism $\J_Z \isom \J_Y$ must fit into a commutative diagram
having two horizontal rows that are resolutions as above and such that the vertical arrows are isomorphisms.
This forces $Z = Y$.

The dual sheaf $\G = \J_Z^\D(-n+1)$ gives a point in $\M(d, d(d-2n-1)/2+ l)$.
By duality, we have a resolution of the form
\[
0 \lra (n+1)\O(-2) \lra n\O(-1) \oplus \O(d-n-2) \lra \G \lra 0,
\]
hence we have an extension of the form
\[
0 \lra \O_C(d-n-2) \lra \G \lra \O_Z \lra 0.
\]
Tableau (2.2.3) \cite{dedicata} for $\G$ has the form
\[
\xymatrix
{
(n+1) \O(-2) \ar[r]^-{\f_1} & n\O(-1) \ar[r]^-{\f_2} & 0 \\
\H^0(\G(-1)) \tensor \O(-2) \ar[r]^-{\f_3} & \H^0(\G \tensor \Omega^1(1)) \tensor \O(-1) \ar[r]^-{\f_4}
& \H^0(\G) \tensor \O
}
\]
Clearly, $\Coker(\f_1)$ has support of dimension $0$ or $1$, being a quotient sheaf of $\G$.
Thus, at least one of the maximal minors of $\f_1$ is non-zero.
It follows that $\Ker(\f_1) \isom \O(-k -2)$, where $n - k$ is the degree of the greatest common
divisor of the maximal minors of $\f_1$.
The exact sequence (2.2.5) from \cite{dedicata} takes the form
\[
0 \lra \O(-k-2) \stackrel{\f_5}{\lra} \Coker(\f_4) \lra \G \lra \Coker(\f_1) \lra 0.
\]
Denote $\G'=\Coker(\f_5)$. From the exact sequence (2.2.4) in \cite{dedicata} we get the resolution
\begin{multline*}
0 \lra \H^0(\G(-1)) \tensor \O(-2) \lra \\
\O(-k-2) \oplus \H^0(\G \tensor \Omega^1(1)) \tensor \O(-1) \stackrel{\f'}{\lra} \H^0(\G) \tensor \O \lra \G' \lra 0.
\end{multline*}
Thus $\H^0(\G')=\H^0(\G)$.
Since $\G'$ is generated by its global sections, we see that $\G' = \O_C(d-n-2)$.
It follows that $\Coker(\f_1) \isom \O_Z$. We conclude that
\[
(\O_Z,\O_C)=(\Coker(\f_1), \ \Coker(\f')(-d+n+2)),
\]
so the pair $(Z,C)$ depends algebraically on $\E^1(\G)$.
\end{proof}

\noindent
The cases $(l, d) = (3, 4)$, $(3, 5)$ of Proposition \ref{proposition_4} were dealt with at
\cite[Proposition 3.3.2]{dedicata}, respectively, \cite[Proposition 2.3.4]{illinois}.

\begin{remark}
\label{remark_5}
For $d \ge n$ the canonical morphism $\Hilb^0(l, d) \to \Hilb^0(l)$ of forgetting the curve is a projective bundle
with fibre of dimension
\[
\frac{(d+1)(d+2)}{2} - \frac{n(n + 1)}{2} -1.
\]
In other words, if a zero-dimensional scheme $Z \subset \PP^2$ of length $n(n+1)/2$ imposes the maximal number of conditions
on curves of degree $n-1$, then it imposes the maximal number of conditions on curves of degree $d$ for all $d \ge n$.
This is a well-known fact, which follows from the resolution of $\I_Z$ given at Proposition \ref{proposition_4}:
\[
\h^0(\I_Z(d)) = (n+1) {d - n + 2 \choose 2} - n {d - n + 1 \choose 2} = \frac{(d+1)(d+2)}{2} - \frac{n(n + 1)}{2}.
\]
Since $\Hilb(l)$ is a rational variety, we deduce that $\Hilb(l, d)$ is rational, too.
\end{remark}

\noindent
An important case of Proposition \ref{proposition_4} occurs when $d = n + 2$. In this case $\H_0(l, d)$ is an open subset
of $\M(n+2, - n^2 - n + 1) \isom \M(n+2, n+1)$.

\begin{proposition}
\label{proposition_6}
Let $n \ge 2$ be an integer, let $l = (n-1)n/2$. Then $\Hilb^0(l, n+1)$, which is a projective bundle over $\Hilb^0(l)$,
is isomorphic to an open subset of $\M(n+1, n)$. Thus, $\M(n+1, n)$ is rational.
\end{proposition}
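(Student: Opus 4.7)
The plan is to recognize the proposition as the boundary case $d = m + 2$ of Proposition \ref{proposition_4}, applied with the shifted parameter $m = n - 1$, and then to read off rationality from Remark \ref{remark_5}. First I would set $m = n - 1$, so that $l = m(m+1)/2$ matches the setup preceding Proposition \ref{proposition_4}, the curve degree $n+1$ equals $m+2$, and the condition ``$Z$ is not contained in a curve of degree $n-2$'' defining $\Hilb^0(l, n+1)$ coincides with the ``degree $m-1$'' condition used there. The hypothesis $m + 2 \le d$ is satisfied with equality, so Proposition \ref{proposition_4} applies verbatim and identifies $\Hilb^0(l, n+1)$ with an open subset of some $\M(n+1, \chi)$.

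To pin down $\chi$, I would compute the Hilbert polynomial of $\J_Z$: with $d = n+1$ and $l = n(n-1)/2$,
\[
P_{\J_Z}(k) = (n+1)k - \frac{(n+1)(n-2)}{2} - \frac{n(n-1)}{2} = (n+1)k + (-n^2 + n + 1),
\]
so $\eta$ lands in $\M(n+1, -n^2 + n + 1)$. Since $-n^2 + n + 1 + (n+1)(n-1) = n$, tensoring by $\O(n-1)$ yields the standard isomorphism $\M(n+1, -n^2 + n + 1) \isom \M(n+1, n)$, producing the desired open embedding.

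For the rationality, I would invoke Remark \ref{remark_5} with the same shifted parameters (which satisfy $d = n+1 \ge n-1 = m$) to see that the forgetful morphism $\Hilb^0(l, n+1) \to \Hilb^0(l)$ is a projective bundle. Since $\Hilb(l)$ is rational and $\Hilb^0(l)$ is a nonempty open subset, the total space $\Hilb^0(l, n+1)$ is rational. Combined with the irreducibility of $\M(n+1, n)$, this forces $\M(n+1, n)$ itself to be rational. The only real content is the arithmetic identifying the two moduli spaces after a twist and the matching of the ``open'' conditions under the reindexing; there is no substantial obstacle, because the structural work has already been done by Proposition \ref{proposition_4} and Remark \ref{remark_5}.
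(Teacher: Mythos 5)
Your argument is correct and is essentially the paper's own justification: Proposition \ref{proposition_6} is precisely the boundary case $d=n+2$ of Proposition \ref{proposition_4} after the reindexing $m=n-1$ (exactly as in the paragraph preceding the proposition, which also asserts the openness of the image $\H_0$ and the twist identification $\M(n+1,-n^2+n+1)\isom \M(n+1,n)$ that you compute), combined with the projective-bundle structure and rationality of $\Hilb(l)$ from Remark \ref{remark_5}; the discussion the paper appends after the statement, via Kronecker modules and \cite[Propositions 7.6 and 7.7]{pacific}, is explicitly offered only as an alternative route. The one tiny point worth noting is the case $n=2$, where the shifted parameter $m=1$ falls outside the literal hypothesis $n\ge 2$ of the setup preceding Proposition \ref{proposition_4}; there the genericity condition is vacuous and the case is covered by Proposition \ref{proposition_2} with $d=3$.
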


\noindent
This proposition is also an immediate consequence of \cite[Propositions 7.6 and 7.7]{pacific}.
The algebraic group
\[
G = (\GL(n-1, \CC) \times \GL(n, \CC))/\CC^*
\]
acts by conjugation on the vector space
of $n \times (n-1)$-matrices with entries linear forms in three variables.
There is a geometric quotient, denoted by $\N(3, n - 1, n)$, of the set of semi-stable matrices modulo $G$.
Let $\N_0(3, n - 1, n)$ denote the open subset given by the condition that the maximal minors of the matrix
have no common factor. According to \cite[Propositions 4.5 and 4.6]{modules-alternatives},
\[
\N_0(3, n - 1, n) \isom \Hilb^0((n-1)n/2).
\]
In \cite{pacific} we proved that the open subset of $\M(n+1, n)$ given by the condition $\H^0(\F(-1)) = 0$
is isomorphic to an open subset of a certain projective bundle with base $\N(3, n - 1, n)$.
If we restrict the base, we obtain an open subset of $\M(n+1, n)$ isomorphic
to a projective bundle over $\N_0(3, n - 1, n)$. This bundle is $\Hilb^0(l, n+1)$.

We will next attempt to address the question of the rationality of $\M(r, \chi)$ in general.
Let $n$ and $r$ be integers such that $2 \le r \le n$.
We consider the open subset $M_0 \subset \M(n+r, n)$
of sheaves $\F$ that have smooth schematic support and that satisfy the conditions
\[
\H^0(\F(-1)) = 0, \qquad \H^1(\F \tensor \Omega^1(1))=0, \qquad \H^1(\F) = 0.
\]
Consider the vector space
\[
W = \Hom(r\O(-2) \oplus (n-r) \O(-1), n\O)
\]
and the algebraic group
\[
G_W = (\Aut(r\O(-2) \oplus (n-r) \O(-1)) \times \Aut(n\O))/\CC^*
\]
acting on $W$ by conjugation.
Consider the open $G_W$-invariant subset $W_0 \subset W$
of injective morphisms whose determinant gives a smooth curve in $\PP^2$.
The cokernel of any morphism $\f \in W_0$ is stable because it is a line bundle on a smooth curve.
Thus, we have a morphism $\rho \colon W_0 \to M_0$, given by $\rho(\f) = [\Coker(\f)]$.

\begin{remark}
\label{remark_7}
The map $\rho \colon W_0 \to M_0$ is a geometric quotient map modulo $G_W$.
Indeed, for any sheaf $\F$ giving a point in $M_0$, the Beilinson Spectral Sequence with $\E^1$-term
\[
\E^1_{ij} = \H^j(\F \tensor \Omega^{-i}(-i)) \tensor \O(i)
\]
converges to $\F$ and leads to a resolution of the form
\[
0 \lra r \O(-2) \oplus (n-r) \O(-1) \stackrel{\f}{\lra} n\O \lra \F \lra 0.
\]
Thus $\rho$ is surjective. Clearly, its fibres are the $G_W$-orbits.
Note that $M_0$ is smooth, being contained in the stable locus of $\M(n+r, n)$.
We can now apply \cite[Theorem 4.2]{popov} (which only requires the hypothesis that $M_0$ be normal)
to conclude that $M_0$ is the geometric quotient of $W_0$ modulo $G_W$.
\end{remark}

Let $l = (n+r)(n+r-1)/2 - n$ and let $\Hilb^0(l) \subset \Hilb(l)$ be the open subset of subschemes $Z \subset \PP^2$
whose ideal sheaf $\I_Z \subset \O_{\PP^2}$ satisfies the cohomological conditions
\[
\H^0(\I_Z(n+r-3)) = 0, \quad \H^1(\I_Z(n+r-1) \tensor \Omega^1) = 0, \quad \H^1(\I_Z(n+r-2)) = 0.
\]
Consider the vector space
\[
U = \Hom((r-1)\O(-2) \oplus (n-r) \O(-1), n\O)
\]
and the algebraic group
\[
G_U = (\Aut((r-1)\O(-2) \oplus (n-r) \O(-1)) \times \Aut(n\O))/\CC^*
\]
acting on $U$ by conjugation. 
Consider the open $G_U$-invariant subset $U_0 \subset U$
of injective morphisms whose maximal minors have no common factor.
The cokernel of any morphism $\psi \in U_0$ is of the form $\I_Z(n+r-2)$ for some $Z \in \Hilb^0(l)$.
Mapping $\psi$ to $Z$ gives us a morphism $\zeta \colon U_0 \to \Hilb^0(l)$.

\begin{proposition}
\label{proposition_8}
The morphism $\zeta$ is a geometric quotient map modulo $G_U$.
\end{proposition}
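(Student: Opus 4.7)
The plan is to follow the proof of Remark \ref{remark_7} step by step, with $\zeta$, $U_0$, $G_U$, $\Hilb^0(l)$, and $\I_Z(n+r-2)$ playing the roles of $\rho$, $W_0$, $G_W$, $M_0$, and $\F$, respectively.

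Given $Z \in \Hilb^0(l)$, I would apply the Beilinson spectral sequence to $\I_Z(n+r-2)$. The three cohomological conditions defining $\Hilb^0(l)$ say precisely that $\H^0(\I_Z(n+r-3))$, $\H^1(\I_Z(n+r-1) \tensor \Omega^1)$, and $\H^1(\I_Z(n+r-2))$ all vanish, so three of the relevant $\E^1$-entries drop out. The remaining dimensions $\h^1(\I_Z(n+r-3)) = r-1$, $\h^0(\I_Z(n+r-2) \tensor \Omega^1(1)) = n-r$, and $\h^0(\I_Z(n+r-2)) = n$ I would compute from the standard sequence $0 \to \I_Z \to \O \to \O_Z \to 0$, the Euler sequence, the identity $l = (n+r)(n+r-1)/2 - n$, and Castelnuovo--Mumford regularity (which upgrades the vanishing of $\H^1(\I_Z(n+r-2))$ to that of $\H^1(\I_Z(n+r-1))$). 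Convergence of the spectral sequence, followed by splicing the two arms of the Beilinson monad (possible because $\Ext^1(\O(-2), \O(-1)) = 0$), would then yield a resolution
\[
0 \lra (r-1)\O(-2) \oplus (n-r)\O(-1) \stackrel{\f}{\lra} n\O \lra \I_Z(n+r-2) \lra 0.
\]
Since $\Coker(\f)$ has rank $1$ and $2$-dimensional support, the maximal minors of $\f$ can share no common factor, so $\f \in U_0$ and $\zeta$ is surjective.

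The fibres of $\zeta$ are the $G_U$-orbits, because any isomorphism between two such cokernels lifts to a compatible pair of automorphisms of $(r-1)\O(-2) \oplus (n-r)\O(-1)$ and $n\O$. Since $\Hilb(l)$ is smooth by Fogarty's theorem, $\Hilb^0(l)$ is smooth and in particular normal, so \cite[Theorem 4.2]{popov} (exactly as invoked in Remark \ref{remark_7}) will conclude that $\zeta$ is a geometric quotient modulo $G_U$.

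The main obstacle is the bookkeeping for the Beilinson spectral sequence. One has to verify that the horizontal differential $\f_4 \colon \H^0(\I_Z(n+r-2) \tensor \Omega^1(1)) \tensor \O(-1) \to \H^0(\I_Z(n+r-2)) \tensor \O$ is injective, that the resulting $d_2$ differential $(r-1)\O(-2) \to \Coker(\f_4)$ is also injective, and that its cokernel is $\I_Z(n+r-2)$. These follow formally from the fact that the $\E^\infty$ page is forced to be concentrated on the antidiagonal $p+q=0$, but checking it directly is the only piece not already written out in Remark \ref{remark_7}.
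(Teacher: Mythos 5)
Your proposal is correct and its core is the same as the paper's: apply the Beilinson spectral sequence of Remark \ref{remark_7} to $\I_Z(n+r-2)$, use the three defining vanishings of $\Hilb^0(l)$ to kill $\E^1_{-2,0}$, $\E^1_{-1,1}$, $\E^1_{0,1}$, compute the remaining dimensions $r-1$, $n-r$, $n$, extract the resolution by $(r-1)\O(-2)\oplus(n-r)\O(-1)\to n\O$, and conclude surjectivity of $\zeta$ with fibres equal to the $G_U$-orbits. The differences are in the periphery. For the auxiliary $\H^2$-vanishings the paper uses Serre duality plus induction on the length of $Z$, while you use the sequence $0\to\I_Z\to\O\to\O_Z\to 0$ (and Castelnuovo--Mumford regularity for $\H^1(\I_Z(n+r-1))$); your route is if anything more direct. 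For the final step the paper does not invoke \cite[Theorem 4.2]{popov} here: it observes that the construction of $\psi$ from $\E^1(\I_Z(n+r-2))$ provides local inverse morphisms (local sections) of $\zeta$, the same device used in Propositions \ref{proposition_2}--\ref{proposition_4}; you instead quote Popov--Vinberg with normality of $\Hilb^0(l)$ (smooth by Fogarty), which is exactly how the paper itself treats $\rho$ in Remark \ref{remark_7}, so this is equally legitimate and arguably needs less verification, at the cost of not exhibiting explicit local sections. One small wording point: to see $\psi\in U_0$ you should argue that the cokernel, being the \emph{torsion-free} rank-one sheaf $\I_Z(n+r-2)$, is locally free outside a finite set, so the maximal minors cannot share a positive-degree factor; ``rank $1$ and $2$-dimensional support'' alone would not rule out one-dimensional torsion, but since the cokernel is already identified with $\I_Z(n+r-2)$ this is only a matter of phrasing, not a gap.
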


\begin{proof}
We apply the Beilinson Spectral Sequence from Remark \ref{remark_7} to $\I_Z(n+r-2)$.
By hypothesis, the terms $\E^1_{-2, 0}$,
$\E^1_{-1, 1}$ and $\E^1_{01}$ vanish. By Serre Duality we have the isomorphisms
\[
\H^2(\I_Z(n+r-2)) \isom \Hom(\I_Z(n+r-2), \omega_{\PP^2})^* \isom \Hom(\I_Z, \O(-n-r-1)).
\]
The group on the right vanishes, as can be seen from the exact sequence
\[
0 = \Hom(\O, \O(-n-r-1)) \lra \Hom(\I_Z, \O(-n-r-1)) \lra \Ext^1(\O_Z, \O(-n-r-1)).
\]
The group on the right vanishes. Indeed, we notice that for a closed point $P$ in $\PP^2$
\[
\Ext^1(\CC_P, \O(-n-r-1))=0
\]
and then we apply induction on the length of $Z$.
Thus, $\E^1_{02} = 0$ and, analogously, $\E^1_{-2, 2}=0$.
It becomes clear now that $\E^1_{-1,2} = \E^{\infty}_{-1,2} = 0$.
We have
\begin{align*}
\ba{lclcl}
\h^1(\I_Z(n+r-3)) & = & -\chi(\I_Z(n+r-3)) & = & r-1, \\
\h^0(\I_Z(n+r-1) \tensor \Omega^1) & = & \phantom{-}\chi(\I_Z(n+r-1) \tensor \Omega^1) & = & n-r, \\
\h^0(\I_Z(n+r-2)) & = & \phantom{-}\chi(\I_Z(n+r-2)) & = & n.
\ea
\end{align*}
From the term $\E^3 = \E^{\infty}$ of the spectral sequence we obtain a resolution of the form
\[
0 \lra (r-1)\O(-2) \oplus (n-r) \O(-1) \stackrel{\psi}{\lra} n\O \lra \I_Z(n+r-2) \lra 0.
\]
Clearly, $\psi \in U_0$, which proves that $\zeta$ is surjective.
Its fibres are precisely the $G_U$-orbits.
The above construction of $\psi$ starting from $\E^1$ gives local inverse morphisms to $\zeta$.
We conclude that $\zeta$ is a geometric quotient map for the action of $G_U$.
\end{proof}

Consider the open subset $H_0 \subset \Hilb(l, n + r)$ of pairs $(Z, C)$ such that $C$ is smooth and $Z \in \Hilb^0(l)$.
Denote by $\J_Z$ the ideal sheaf of $Z$ in $C$. Consider $\psi \in U_0$
such that $\I_Z(n+r-2) \isom \Coker(\psi)$. We can find $\f \in W$ such that $\J_Z(n+r-2) \isom \Coker(\f)$
and the restriction of $\f$ to a direct summand of $\O(-2)$ is $\psi$.
As $\det(\f)$ defines the smooth curve $C$, we see that $\f \in W_0$.
We have constructed a morphism
\[
\eta \colon H_0 \to M_0, \qquad \eta(Z,C) = [\J_Z(n+r-2)].
\]

Let $q$, $m$, $n$ be positive integers and consider the vector space $K = K(q, m, n)$ of $n \times m$-matrices with
entries in $\CC^q$. The elements $K$ are called \emph{Kronecker modules}. The algebraic group
\[
G = (\GL(m, \CC) \times \GL(n, \CC))/\CC^*
\]
acts on $K$ by conjugation.
By Geometric Invariant Theory, the set of semi-stable points $K^\sst$, if non-empty, admits a good quotient
modulo $G$, denoted $\N(q, m, n)$, which is a projective variety of dimension $qmn- m^2 - n^2 +1$.
According to \cite{drezet_reine}, $K^\sst \neq \emptyset$ and $\dim \N(q, m , n) > 0$ if and only if
\[
x < \frac{m}{n} < \frac{1}{x},
\]
where $x$ is the smaller solution to the equation $x^2 -qx + 1 = 0$.
In this case, the set $K^\st$ of stable points is also non-empty.

Assume now that $r < n$. Let $U^\st \subset U$ be the open $G_U$-invariant subset of morphisms $\psi$ such that
\[
\psi_{12} \in \Hom((n-r) \O(-1), n\O) = K(3, n-r, n)
\]
is stable as a Kronecker module.
Let $x$ be the smaller solution to the equation $x^2 - 3x + 1 = 0$.
As mentioned above, $K(3, n-r, n)^\st$ is non-empty if
\[
x < \frac{n-r}{n} < \frac{1}{x}.
\]
This corresponds to the case when $\dim \N(3, n-r, n) > 0$.
We examine separately the case when $\dim \N(3, n-r, n) = 0$.
The diophantine equation
\[
3(n-r) n - (n-r)^2 - n^2 +1 = 0
\]
can be solved by the method of \cite[Section 3.1]{kronecker}.
The solutions are of the form
\[
(n-r, n) = R^k(1, 3), \quad k \ge 0,
\]
where
\[
R \colon \ZZ^2 \to \ZZ^2 \qquad \text{is given by} \qquad R(m, n) = (n, 3n-m).
\]
Note that $K(3, 1, 3)^\st$ is non-empty
because any $3 \times 1$-matrix with linearly independent entries in $\CC^3$ is stable.
According to \cite{drezet_reine}, we have canonical isomorphisms
\[
\N(3, m , n) \isom \N(3, n, 3n-m).
\]
It follows that $K(3, n-r, n)^\st$ and, a fortiori, $U^\st$ are non-empty
for all pairs
\[
(n-r, n) = R^k(1, 3), \quad k \ge 0.
\]
Denote $U_0^\st = U_0 \cap U^\st$.
Let $H_0^\st \subset H_0$ be the open subset of pairs $(Z, C)$ such that $Z \in \zeta(U_0^\st)$.
We also define $W_0^\st \subset W_0$ by the condition that $\f_{12}$ be stable as a Kronecker module
and put $M_0^\st = \rho(W_0^\st)$. Clearly, $\eta^{-1}(M_0^\st) = H_0^\st$.

\begin{proposition}
\label{proposition_9}
The morphism $\eta \colon H_0 \to M_0$ is surjective.
Assume that either
\[
2 \le r < \frac{n(\sqrt{5} - 1)}{2}
\]
or
\[
(n-r, n) \in \{ R^k(1, 3) \mid \ k \ge 0\} = \{(1, 3), \ (3, 8), \ (8, 21), \ \ldots \ \}.
\]
Then the generic fibres of $\eta$ are isomorphic to $\PP^{r-1}$.
Thus, $\M(n+r, n) \times \PP^{r-1}$ is a rational variety,
so $\M(n+r, n)$ and $\M(n+r, r)$ are unirational.
\end{proposition}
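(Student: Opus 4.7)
My plan is to handle surjectivity and the fiber claim simultaneously via a Riemann--Roch analysis on the smooth curve $C$ underlying each sheaf $[\F] \in M_0$, and then to deduce rationality from rationality of the total space $H_0$. The key numerical identity is $l = g + r - 1$, where $g = (n+r-1)(n+r-2)/2$ is the arithmetic genus of a smooth plane curve of degree $n+r$.

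Fix $[\F] \in M_0$. By Remark~\ref{remark_7}, its resolution has determinant cutting out the smooth schematic support $C$, so $\F$ is a line bundle on $C$; set $L := \F(-n-r+2)|_C$, of degree $-l$. Since $C$ is intrinsic to $\F$, the fiber $\eta^{-1}([\F])$ is the set of effective degree-$l$ divisors $Z \subset C$ with $\O_C(-Z) \isom L$, canonically identified with the linear system $\PP(\H^0(C, L^{-1}))$. By Riemann--Roch on $C$, $\h^0(L^{-1}) \ge l - g + 1 = r \ge 2$, so any nonzero section gives an effective $Z$ with $\eta(Z, C) = [\F]$; since the cohomological conditions defining $\Hilb^0(l)$ are open, the generic such $Z$ lies in $\Hilb^0(l)$, proving surjectivity. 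The fiber dimension is $r - 1 + \h^1(L^{-1})$, and because $\deg L^{-1} = l > g - 1$, Brill--Noether theory supplies a generic line bundle of degree $l$ on $C$ with $\h^1 = 0$. The numerical hypothesis on $(n, r)$ ensures $K(3, n-r, n)^\st \ne \emptyset$, whence $M_0^\st$ is open dense in $\M(n+r, n)$; as $\F$ varies over $M_0^\st$, the line bundle $L^{-1}$ varies in a dense subset of the relative Picard scheme, so the generic fiber is $\PP^{r-1}$.

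For rationality, Proposition~\ref{proposition_8} presents $\Hilb^0(l) \isom U_0/G_U$, which is open in the rational variety $\Hilb(l)$, hence rational. The forgetful map $H_0 \to \Hilb^0(l)$ has fibers the linear systems $\PP(\H^0(\I_Z(n+r)))$ of curves of degree $n+r$ through $Z$, of constant dimension on $\Hilb^0(l)$ (computable by twisting the resolution of $\I_Z$ from Proposition~\ref{proposition_8} by $\O(2)$ and taking cohomology); thus $H_0$ is birationally a projective bundle over $\Hilb^0(l)$, and hence rational. Combined with the birational equivalence $H_0 \sim M_0 \times \PP^{r-1}$ from the fiber analysis, this yields rationality of $\M(n+r, n) \times \PP^{r-1}$. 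Unirationality of $\M(n+r, n)$ is then immediate, and the isomorphism $\M(n+r, n) \isom \M(n+r, -n) \isom \M(n+r, r)$ (twist by $\O(1)$ composed with $\F \mapsto \F^{\D}$) transports unirationality to $\M(n+r, r)$.

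The main obstacle will be the global Brill--Noether genericity step: showing that as $\F$ varies over $M_0^\st$, the pair $(C, L^{-1})$ dominates a dense subset of the relative Picard variety over smooth plane curves of degree $n+r$, so that $\h^1(L^{-1}) = 0$ generically. The Kronecker-stability hypothesis on $(n, r)$ is precisely what supplies the requisite parameter freedom for this dominance argument to succeed.
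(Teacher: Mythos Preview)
Your approach via Riemann--Roch on the smooth curve $C$ is genuinely different from the paper's, which works entirely with explicit matrix manipulations: the paper builds a map $\theta\colon\PP^{r-1}\to\eta^{-1}[\F]$ by deleting columns of $\f$, proves $\theta$ surjective by hand, and proves $\theta$ injective for $[\F]\in M_0^\st$ using that a stable Kronecker module $\f_{12}$ has trivial isotropy. The numerical hypothesis on $(n,r)$ enters only to guarantee $M_0^\st\neq\emptyset$.

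Your write-up, however, has two gaps.

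\emph{Surjectivity.} Saying ``the cohomological conditions defining $\Hilb^0(l)$ are open'' does not establish that \emph{any} $Z\in|L^{-1}|$ lies in $\Hilb^0(l)$; openness gives nothing without a single point. The correct argument is direct: from $0\to\O(-n-r)\to\I_Z\to\J_Z\to 0$ and $\J_Z(n+r-2)\isom\F$, one twists and checks that each vanishing in the definition of $\Hilb^0(l)$ follows from the corresponding vanishing in the definition of $M_0$ (using $\H^1(\Omega^1(-1))=0$, etc.). In fact \emph{every} $Z\in|L^{-1}|$ lies in $\Hilb^0(l)$, so the fibre of $\eta$ is the complete linear system.

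\emph{The Brill--Noether step.} This is the main gap: you do not prove that $(C,L^{-1})$ dominates the relative Picard scheme, and your suggestion that Kronecker stability ``supplies the requisite parameter freedom'' is not how that hypothesis functions. But the whole detour is unnecessary. By Serre duality on $C$ and adjunction $K_C=\O_C(n+r-3)$,
\[
\h^1(C,L^{-1})=\h^0(C,K_C\otimes L)=\h^0\big(C,\F(-1)|_C\big)=\h^0(\PP^2,\F(-1))=0,
\]
the last equality being one of the defining conditions of $M_0$. Thus $\h^0(L^{-1})=l-g+1=r$ and the fibre is $\PP^{r-1}$ for \emph{every} $[\F]\in M_0$, with no appeal to Brill--Noether and no need for the Kronecker-stability hypothesis at all. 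With this fix your route is shorter than the paper's and yields a stronger conclusion (all fibres, not just generic ones); what your argument loses is the explicit identification of the fibre with the $\PP^{r-1}$ of column operations, which the paper's matrix description makes visible.
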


\begin{proof}
Consider $\f \in W_0$, let $\F = \Coker(\f)$ and let $C$ be the support of $\F$.
For $j = 1, \ldots, r$ denote by
\[
\f_j \colon (r-1) \O(-2) \oplus (n-r)\O(-1) \lra n\O
\]
the morphism obtained by deleting column $j$ from the matrix representing $\f$.
Denote by $f_{ij}$, $1 \le i \le n$, the maximal minor of $\f_j$ obtained by deleting row $i$.
Given $a = (a_1, \ldots, a_r) \in \PP^{r-1}$ we choose a morphism
\[
\alpha \colon (r-1) \O(-2) \oplus (n-r) \O(-1) \lra r \O(-2) \oplus (n-r) \O(-1)
\]
such that, for $1 \le i \le r$, $a_i$ is the maximal minor of $\alpha$ obtained by deleting row $i$.
We claim that $\psi = \f \alpha$ belongs to $U_0$, that is, the maximal minors of $\psi$ have no common factor.
Indeed, there is
\[
g \in \Aut(r\O(-2) \oplus (n-r)\O(-1))
\]
whose restriction obtained by deleting the first column equals $\alpha$.
Thus, $(\f g)_1 = \psi$, so any common factor of the maximal minors of $\psi$ divides $\det(\f g)$.
But the latter is irreducible, by hypothesis. Let $Z(a) = \zeta(\psi) \in \Hilb^0(l)$.
Note that $Z(a)$ is given by the ideal
\[
(a_1 f_{i1} + \cdots + a_r f_{ir}, \ 1 \le i \le n),
\]
so it does not depend on the choice of $\alpha$.
Clearly, $Z(a)$ is a subscheme of $C$ and $\eta(Z, C) = [\F]$,
showing that $\eta$ is surjective.
Consider the morphism
\[
\theta \colon \PP^{r-1} \to \eta^{-1}[\F], \qquad \theta(a) = (Z(a), C).
\]
To prove that $\theta$ is surjective, assume that $\eta(Z, C) = [\F]$.
Choose $\psi' \in \zeta^{-1}(Z)$.
We saw above that there is $\f' \in W_0$ such that $\f'_1 = \psi'$ and $\Coker(\f') \isom \F$.
There is $(g, h) \in G_W$ such that $\f' = h \f g$. Thus, $h^{-1} \psi' = (\f g)_1$, so $Z = \zeta((\f g)_1)$.

We will prove that $\theta$ is injective if $\F$ gives a point in $M_0^\st$,
which, by hypothesis, is non-empty.
Actually, the argument only uses the fact that $\f_{12}$ has trivial isotropy group, cf. \cite[Lemma 8.1]{pacific}.
Assume that $Z(a) = Z(b)$.
There are $\alpha$ and $\beta$ in $U_0$ corresponding to $a$, respectively, $b$.
There is $(g, h) \in G_U$ such that
$h \f \alpha g = \f \beta$. From the relation $h \f_{12} \alpha_{22} g_{22} = \f_{12} \beta_{22}$
and the fact that $\f_{12}$ has trivial isotropy group
we deduce that $h$ and $\alpha_{22}^{} g_{22}^{} \beta_{22}^{-1}$
are of the form $c$ times the identity, for some $c \in \CC^*$.
We may replace $\alpha$ with $c \alpha g$, and we may write $\f \alpha = \f \beta$.
Choose
\[
g' \in \Aut(r\O(-2) \oplus (n-r)\O(-1))
\]
such that the first row of $\alpha' = g' \alpha$ is zero
and put $\f' = \f {g'}^{-1}$, $\beta' = g' \beta$.
The maximal minors of $\alpha'$ and $\beta'$ are
$a_1', \ldots, a_r', 0, \ldots, 0$, respectively, $b_1', \ldots, b_r', 0, \ldots, 0$.
There is an automorphism $f$ of $\PP^{r-1}$ such that
\[
a' = (a_1', \ldots, a_r') = f(a), \qquad b' = (b_1', \ldots, b_r') = f(b).
\]
From the relation $\f ' \alpha' = \f' \beta'$ we see that the first row
of $\beta'$ is zero. If this were not the case, then the first column of $\f'$ would be a linear combination
of the remaining columns of $\f'$, which is absurd.
We get $b' = (1, 0, \ldots, 0) = a'$, hence $a = b$.

We conclude that $\theta \colon \PP^{r-1} \to \eta^{-1}[\F]$ is an isomorphism for
$[\F]$ belonging to the open subset of $M_0^\st$ of points having smooth fibre.
Clearly,
\[
\M(n+r, n) \times \PP^{r-1} \quad \text{is birational to} \quad \Hilb(l, n+r).
\]
The latter is rational because the map $\Hilb(l, n+r) \to \Hilb(l)$ is a projective bundle over $\Hilb^0(l)$
(with fibre of dimension $3n + 2r$).
Thus $\M(n+r, n)$ is unirational.
In view of the isomorphism
\[
\M(n+r, n) \isom \M(n+r, r)
\]
of \cite{rendiconti}, also $\M(n+r, r)$ is unirational.
\end{proof}

\end{document}